\documentclass[oneside,a4paper,12pt]{amsart}


\usepackage[utf8]{inputenc}
\usepackage[T1]{fontenc}    

\usepackage{amsmath}
\usepackage{amsfonts}  
\usepackage{amssymb}
\usepackage{amsthm}
\usepackage{amsaddr}
\usepackage{mathtools}
\usepackage{amscd}

\usepackage{enumitem}

\usepackage{charter}

\usepackage[a4paper,hmargin=3cm,vmargin=3.5cm]{geometry}

\usepackage{tikz}
	\usetikzlibrary{cd,positioning,hobby,intersections,arrows,calc}
	\tikzset{>=latex}

\usepackage[backend=biber,style=alphabetic]{biblatex}
\addbibresource{currentgroups-biblia.bib}

\usepackage[bookmarks]{hyperref}
	\hypersetup{colorlinks=true,allcolors=blue}


\newcommand{\R}{\mathbb{R}}
\newcommand{\Z}{\mathbb{Z}}
\newcommand{\A}{\mathcal{A}} 
\newcommand{\pth}[1]{\ensuremath{\mathcal{P}(#1)}} 


\DeclareMathOperator{\tr}{tr}		
\DeclareMathOperator{\ad}{ad}		
\DeclareMathOperator{\id}{id}		
\newcommand{\bndr}[1]{\partial\mkern-1mu{#1}}	
\newcommand{\isom}{\ensuremath{\cong}} 

\newcommand{\ext}{\ensuremath{\mkern.5mu{d}\mkern-.5mu}} 
\newcommand{\cob}{\ensuremath{\mkern1mu{\delta}}} 
\newcommand{\lder}[1]{\ensuremath{\mathcal{L}_{#1}}} 
\newcommand{\inv}[1]{\ensuremath{{#1}^{-1}}} 

\newcommand{\gext}[1]{\ensuremath{\widehat{#1}}} 
\newcommand{\extseq}[3]{\ensuremath{#1 \hookrightarrow \gext{#2} \to #3}} 
\newcommand{\lie}[1]{\ensuremath{\mathfrak{#1}}} 
\newcommand{\aut}[1]{\ensuremath{\mathrm{aut}(#1)}} 


\newcommand{\coh}{\ensuremath{\mathrm{H}}} 
\newcommand{\coc}{\ensuremath{\mathrm{Z}}} 
\newcommand{\coch}{\ensuremath{\mathrm{C}}} 

\newcommand{\smooth}[1]{\ensuremath{\textrm{Map}(#1)}}

\newcommand{\ballgroup}{\ensuremath{B^3_{\flat}G}} 
\newcommand{\basedball}{\ensuremath{B^3_{\flat,c}G}} 
\newcommand{\loopgroup}{\ensuremath{\Omega^3G}} 
\newcommand{\loopgroupmod}{\ensuremath{\mathcal{S}}} 
\newcommand{\spheregroup}{\ensuremath{S^3G}} 

\newcommand{\potgroup}{\ensuremath{\smooth{\ballgroup,S^1}}} 


\theoremstyle{plain}
\newtheorem{thm}[equation]{Theorem}
\newtheorem{lem}[equation]{Lemma}
\newtheorem{prop}[equation]{Proposition}

\theoremstyle{definition}
\newtheorem{defn}[equation]{Definition}

\theoremstyle{remark}
\newtheorem{rem}[equation]{Remark}

\numberwithin{equation}{section}

\pagestyle{headings}
\setcounter{page}{1}


\title{A $2$-group construction from an extension of the $3$-loop group $\loopgroup$}
\author{Jouko Mickelsson and Ossi Niemimäki}
\address{Department of Mathematics and Statistics, University of Helsinki}
\email{jouko@kth.se, ossi.niemimaki@helsinki.fi}

\subjclass[2010]{22E67, 81R10, 18D35, 22A22}
\keywords{loop group, Lie $2$-group, automorphisms of group extension, Mickelsson-Faddeev cocycle}

\date{\today}

\begin{document}

\thispagestyle{empty}

\begin{abstract}
We define a $3$-loop group $\loopgroup$ as a subgroup of smooth maps from a $3$-ball to a Lie group $G$, and then construct a $2$-group based on an automorphic action on the Mickelsson-Faddeev extension of $\loopgroup$. In this we follow the strategy of Murray et al., who earlier described a similar construction in one dimension. The three-dimensional situation presented here is further complicated by the fact that the $3$-loop group extension is not central. 
\end{abstract}

\maketitle

\section{Introduction}

There is a growing interest in generalizing Lie groups and algebras to higher-dimensional objects in the sense of category theory. In particular, such smooth categorical groups would be valuable as new mathematical objects that would aid in (re)defining and building fundamental physics. One driving notion is that of the string group and its geometric realizations, which are closely tied to the concept of loop groups.

A one-dimensional loop group generalization in this vein has been built in \cite{bscs2007}, in which a specific relation between Lie $2$-algebras and Lie $2$-groups was constructed using groups of based paths. This is shown to lead to a geometric realization of the string group. More generally, there is a no go -theorem which states that the $2$-group generalization coming from the direct Lie $2$-algebra construction for a simple Lie group $G$ allows for a smooth structure only in a very specific situation~\cite{bl2004}. The way around this is to construct an infinite-dimensional Lie $2$-group whose Lie $2$-algebra is \emph{equivalent} to the desired Lie $2$-algebra.

This approach of using categorical equivalences instead of isomorphisms seems to be crucial in constructing such higher objects. With the string $2$-group model in mind, the loop group setting can be extended to quasi-periodic paths~\cite{mrw2017}. Yet there is a topological obstruction for building a strict $2$-group model on this group; the remedy is to flatten the paths around the base point in such a way that there is a categorical equivalence to the original $2$-group. This gives a \emph{coherent} $2$-group model for the string group, which -- in contrast to the model in \cite{bscs2007} -- also admits the action of the circle group $S^1$.

Our aim in this paper is simple: following the strategy from \cite{mrw2017} we construct a similar $2$-group in three dimensions by considering the group of maps from the $3$-sphere to a Lie group $G$. To begin with, we study the Mickelsson-Faddeev cocycle and the related Lie group extension~\cite{mic1987}. This cocycle is crucially dependent on defining Lie-algebra valued $1$-forms over the domain -- in particular, the extension is not central.\footnote{In quantum field theory, the cocycle represents the Hamiltonian quantisation anomaly of massless chiral fermions coupled to external gauge potentials.}

On this $3$-loop group extension we impose the action of the group of smooth maps $\ballgroup$ from the $3$-ball to the Lie group $G$ with the additional condition that the maps are flattened on the boundary $S^2$. We show that it is possible to build a crossed module starting with the group of smooth paths $\ballgroup$ in a similar fashion to the one-dimensional loop group described in \cite{mrw2017}. The main difference is that the lifting of the group action requires some deliberation since the extension of the $3$-loop group is not central. This complicates the structure of the crossed module too, and we need to extend the acting group $\ballgroup$ by the group $\potgroup/S^1$. From the automorphic action of $\gext{\ballgroup}$ on the $3$-loop group extension $\gext{\loopgroup}$ and the natural projection between these two groups we gain a crossed module and thus a strict $2$-group. 

One of the strengths of the original one-dimensional loop groupoid construction is that it allows the action of the circle group. In three dimensions one would like to have the corresponding action of $SO(4)$, but there seems to be no straight-forward way to incorporate this into the picture presented here. The extension of the $3$-loop group necessitates a fixed point in $B^3$ -- namely, the contracted boundary $S^2$ -- and this choice cannot be equivariant under the symmetry action.

\subsection{Notation and conventions}

Unless otherwise stated, $G$ is a simply-connected Lie group. The group identity element is denoted by $e$ throughout, and we write $\pth{G}$ for the group of based paths in $G$. The Lie algebra of $G$ is denoted by $\lie{g}$. We identify the $1$-sphere $S^1$ with the circle group.

In places there is an implicit assumption to consider the connected component of a given group in case the connectedness for the whole group is not available.

\section{$3$-loop group and an automorphic action}

Let $\spheregroup$ be the group of smooth maps from the $3$-sphere to a Lie group $G$. Since every $n$-sphere can be given as the quotient $B^n/S^{n-1}$, where the boundary of the $n$-ball is contracted to a point (call it the north pole of the sphere $S^n$), we define by analogy the flattened group 
\[
	\ballgroup = \{ f \in \smooth{B^3,G} : \partial_r f = 0 \textrm{ on the boundary } S^2 \} ,
\]
where the group multiplication is as usual the point-wise multiplication in the domain. We then have the \emph{$3$-loop group} defined as
\[
	\loopgroup = \{ f \in \ballgroup : f \textrm{ extends to } S^3 \textrm{ and } f(S^2) = e \} .
\]
Note that while the group of rotations $SO(4)$ acts on the sphere group $S^3G$, it does not act on $\loopgroup$ since the boundary of $B^3$ is the contracted fixed point and the action cannot be extended to $S^3$.

It is easy to see that $\ballgroup$ retains the Lie group structure of $\smooth{B^3,G}$, and likewise for $\loopgroup$. In the case of one-dimensional loop group $\Omega G$ and the free loop group $LG$, there is the split exact sequence
\[
	\Omega G \hookrightarrow LG \to G .
\]
For the $3$-loop group this relationship is retained as follows. If we denote by $\basedball$ the subgroup of $\ballgroup$ of maps that are constant on the boundary $S^2$, the group $\basedball$ is in fact a principal $\loopgroup$-bundle over $G$:
\[
	\loopgroup \hookrightarrow \basedball \xrightarrow{\phi} G ,
\]
where $\inv{\phi}(e) = \loopgroup$ is the canonical fibre. 

Furthermore, we note that $\loopgroup$ is a split normal subgroup of $\ballgroup$. The quotient map
\[
	q: \ballgroup \to \ballgroup/\loopgroup \isom \smooth{S^2,G}
\]
has $\loopgroup$ as its kernel.

\subsection{Abelian extension $\gext{\loopgroup}$ and an automorphic action on $\loopgroup$}

Given any mapping group $X^G = \smooth{X,G}$ for some $n$-dimensional Lie group $X$, there is an extension by the Abelian group of smooth maps $\smooth{X^G,S^1}$~\cite[pp. 66--67]{ps1986}. Let us then consider an Abelian extension~\cite{mic1987}
\[
	\extseq{\potgroup}{\spheregroup}{\spheregroup} .
\]
In the gauge-theoretical formulation the fibre is the space of vector potentials $\A$ on $S^3$ with values in the Lie algebra $\lie{g}$. The group $\potgroup$ is closely related since the potentials $A\in \A$ can be identified with functions $f \in \ballgroup$ in the sense of Maurer-Cartan forms so that $A \coloneqq \inv{f} \ext f$.\footnote{See the motivation for the original idea in \cite{mic1987} or \cite{mw2016}.} 

On the Lie algebra level we have the corresponding Mickelsson-Faddeev cocycle
\begin{equation}\label{eq:s3g-cocycle}
	\theta(A;x,y) = \frac{i}{24\pi}\int_{S^3} \tr A [\ext x, \ext y] .
\end{equation}
No modification is needed to write this in terms of the $3$-loop group:
\[
	\extseq{\potgroup}{\loopgroup}{\loopgroup} .
\]
The corresponding Lie algebra cocycle is essentially of the same form as above.

One can verify by direct but tedious computation (see Appendix \ref{app1}) that the right adjoint action $(\cdot)^f$ with $f \in \ballgroup$ adds only a $2$-coboundary term to the Lie algebra cocycle:
\begin{equation}\label{eq:invariant_2cocycle}
	\theta^f = \theta + \{\textrm{coboundary}\} .
\end{equation}
Hence the cocycle is cohomologically invariant under such automorphisms.

More generally, we have the following proposition.
\begin{prop}
\label{p:path_group_invariance}
Let $K$ be a simply-connected Lie group such that $K = H/N$, where $N \subset H$ is a normal subgroup of a simply-connected Lie group $H$ for which the first three integral cohomology groups are trivial. Denote by $\pi: H \to K$ the natural projection so that the fibration has $N$ as the canonical fibre. Then, given a cohomology class $[\omega] \in \coh^3(K,\Z)$, there is a $2$-form $\theta$ on $H$ given by the pullback $\pi^*\omega = \ext \theta$ such that the following holds:
\begin{enumerate}
	\item $\theta$ is closed in vertical directions and defines a $2$-cocycle on the Lie algebra \lie{n} of $N$ with values in $\smooth{H,\R}$.
	\item $[\theta]$ in $\coh^2({\lie{n},\smooth{H,\R}})$ is uniquely determined by $\omega$.
	\item $[\theta]$ in $\coh^2({\lie{n},\smooth{H,\R}})$ is invariant with respect to right action $r_f^*$, where $f\in H$.
\end{enumerate}
\begin{proof} 
Consider $\theta' = \theta + \psi$, where $\psi$ is a closed $2$-form. Since $\coh^2(H,\Z)$ is trivial, the $2$-form $\psi$ is also exact. Hence the $2$-cocycle $\theta'-\theta$ in $N$ is a coboundary, and $\theta$ defines a $2$-cocycle on the Lie algebra of $N$ with values in $\smooth{H,\R}$.

Let $f \in H$ and consider a path $f_t$ in $H$, where $t\in[0,1]$ with $f(0) = e$ and $f(1) = f$. Then the cohomology class $[\omega]$ is invariant by the right action in $K$ by $\pi(f)$:
\[
	r^{*}_{\pi(f)}\omega \sim \omega .
\]
Moreover,
\[
	\ext (r^{*}_f \theta ) = r^{*}_f \ext \theta = r^{*}_{\pi(f)} \omega \sim \omega .
\]
Now, given $\omega' = \omega + \ext \phi$ for some $2$-form $\phi$, we have the pullback
\[
	\pi^* (\omega') = \pi^* \omega + \ext (\pi^* \phi) = \ext \theta  + \ext (\pi^* \phi) ,
\]
and hence, since the second cohomology group for $H$ is trivial, we can write for some $1$-form $\alpha$
\[
	\theta' = \theta + \pi^* \phi + \ext \alpha ,
\]
where the term $ \pi^* \phi$ is zero in vertical directions. Thus the Lie algebra cohomology class $[\theta]$ is uniquely determined by $\omega$ and it is invariant with respect to $r^*_f$ for any $f\in H$.
\end{proof}
\end{prop}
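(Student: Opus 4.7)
My plan is to first produce $\theta$ using the triviality of the de Rham cohomology $\coh^3(H,\R)$, and then verify the three properties in turn. Since the integral cohomology $\coh^3(H,\Z)$ vanishes, the universal coefficient theorem together with the de Rham isomorphism yield $\coh^3(H,\R) = 0$. The pullback $\pi^*\omega$ is a closed $3$-form on $H$, so there exists a $2$-form $\theta$ with $\pi^*\omega = \ext\theta$. This $\theta$ is not unique, and controlling its ambiguity is what will drive parts (2) and (3).

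For part (1), since $\pi_*$ annihilates vertical directions, $\pi^*\omega$ evaluates to zero on any triple of vertical vector fields. Extending $X,Y,Z \in \lie{n}$ to the vertical vector fields generated by the $N$-action on $H$, the Koszul formula for $\ext\theta$ applied to $(X,Y,Z)$ becomes precisely the Chevalley--Eilenberg $2$-cocycle identity for
\[
	c(X,Y)(h) := \theta_h(X_h,Y_h) ,
\]
with the coefficient module $\smooth{H,\R}$ carrying the natural $\lie{n}$-action by vertical differentiation.

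For part (2), if $\theta'$ is another primitive of $\pi^*\omega$, then $\theta - \theta'$ is closed; since $\coh^2(H,\R) = 0$, it is exact, $\theta - \theta' = \ext\alpha$, which restricts along $\lie{n}$ to a coboundary. If instead $\omega$ is replaced by a cohomologous $\omega + \ext\phi$, then up to an exact correction the new primitive differs by $\pi^*\phi$, which vanishes on vertical vectors and so yields the trivial cochain. For part (3), any $f \in H$ can be joined to $e$ by a smooth path $f_t$ (since $H$ is connected), and pushing this path down by $\pi$ gives a smooth homotopy from $\id_K$ to $r_{\pi(f)}$; hence $r^{*}_{\pi(f)}\omega - \omega = \ext\phi_f$ for some $\phi_f$ on $K$. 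Pulling back and using $\coh^2(H,\R) = 0$ once more produces $r^{*}_f\theta = \theta + \pi^*\phi_f + \ext\alpha_f$, and since $\pi^*\phi_f$ is trivial in vertical directions, the $\lie{n}$-cohomology class of $\theta$ is preserved.

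The main subtlety I anticipate is the bookkeeping between de Rham exactness on $H$ and the Chevalley--Eilenberg coboundary condition on $\lie{n}$ with coefficients in $\smooth{H,\R}$: one has to confirm that the $\lie{n}$-module structure comes out correctly under vertical differentiation, that $\ext\alpha$ restricted to vertical fields really produces a Chevalley--Eilenberg coboundary (not merely a closed cochain), and that $\pi^*\phi$ is genuinely a trivial $\lie{n}$-cochain rather than just vanishing on a single pair. The existence and uniqueness parts are essentially formal consequences of the cohomological hypotheses on $H$; the invariance part is where one must be careful to carry the correction terms coherently through the pullback.
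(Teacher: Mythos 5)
Your proposal is correct and follows essentially the same route as the paper: the ambiguity in the primitive $\theta$ is controlled by $\coh^2(H,\R)=0$, the change of representative $\omega\mapsto\omega+\ext\phi$ contributes only the vertically vanishing term $\pi^*\phi$, and right-invariance comes from the homotopy $\id_K\simeq r_{\pi(f)}$ induced by a path $f_t$ from $e$ to $f$. You additionally spell out two points the paper leaves implicit --- the existence of $\theta$ via $\coh^3(H,\R)=0$ and the derivation of the Chevalley--Eilenberg cocycle identity from the vanishing of $\ext\theta=\pi^*\omega$ on vertical triples --- which is a useful, but not essentially different, elaboration.
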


\begin{rem}\label{r:path_group} 
The proposition can be applied to $3$-loops with $K=S^2G$, $N=S_e^3G$ and $H=\pth{S^2G}$. From this we can squeeze out a general formula for computing the coboundary (see Appendix~\ref{app1}).
\end{rem}

\begin{rem}Since $[\omega] \in \coh^3(H/N,\Z)$, one can think of $\omega$ as a representative of the Dixmier-Douady class on $K$. In the gauge-theoretic framework the group $K$ models the moduli space of based gauge transformations which parametrisizes fermionic Fock spaces~\cite{singer1981}. There is a natural principal bundle
\[
	\mathcal{G}_e \hookrightarrow \A \to \A/\mathcal{G}_e ,
\]
where $\A$ is the space of gauge potentials, and $\mathcal{G}_e$ is the space of based gauge transformations related to the symmetry group $G$. The Dixmier-Douady class in $\coh^3(\A/\mathcal{G}_e)$ represents the obstruction to lifting the $\mathcal{G}_e$-bundle to its extension by $\smooth{\A,S^1}$. This is also the origin of the Lie algebra cocyle \eqref{eq:s3g-cocycle}. \cite{cmm1997} 
\end{rem}

We now lift this action invariance of the Lie algebra cocycle to the Lie group level. To this end we need further results concerning cohomology; more details on the following can be found in \cite{neeb2004} and \cite{mw2016}.

Let $H$ be a Lie group. Recall that an Abelian Lie group $A$ is a smooth $H$-module, if it is a $H$-module and the action map $H\times A \to A$ is smooth. Assuming that $N$ is a normal subgroup of $H$, the group of $N$-invariant elements of $A$ is denoted by
\[
	A^N = \{ a \in A : (\forall n \in N) ~ n.a = a \} ,
\]
which is a $H$-submodule of $A$. 

\begin{defn}[Refined cohomology~{\cite[Appendix D]{neeb2004}}]
We denote \emph{smooth group cohomology} by $\coh^n_s(N,A)$ and \emph{continuous Lie algebra cohomology} by $\coh^n_c(\lie{n},\lie{a})$. The gist is that the maps $N^n \to A$ (resp. $\lie{n}^n \to \lie{a}$) are smooth (resp. continuous). The smoothness and continuity are defined locally in a neighborhood of the identity.

A cocycle $f \in \coc^n_s(N,A)$ is \emph{smoothly cohomologically invariant with respect to $H$} if there is a map
\[
	\phi: H \to \coch_s^{p-1}(N,A) \quad \text{such that} \quad  \ext(\phi(h)) = h.f - f \quad \forall h \in H ,
\]
and the map
\[
	H \times N^{p-1} \to A : (h,n_1,\dots,n_{p-1}) \mapsto (h.f-f)(n_1,\dots,n_{p-1})
\]
is smooth in an identity neighborhood. This gives us \emph{smoothly invariant cohomology classes of $N$ with values in $A$}.
\end{defn}

\begin{thm}[Cohomology homomorphism~{\cite[Thm. VII.2]{neeb2004}}]\label{th:cohom_hom}
Let $N$ be a connected Lie group and $A \isom \lie{a}/\Gamma_A$ a smooth $N$-module, where $\Gamma_A \subset \lie{a}$ is a discrete subgroup of the sequentially complete locally convex space $\lie{a}$. Then there is an exact sequence
\[
	\hom(\pi_1(N),A^N) \to \coh^2_s(N,A) \to \coh^2_c(\lie{n},\lie{a}) .
\]
\end{thm}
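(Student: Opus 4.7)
The plan is to construct the two arrows explicitly and then check exactness at $\coh^2_s(N,A)$. The rightward derivation map $D : \coh^2_s(N,A) \to \coh^2_c(\lie{n},\lie{a})$ is obtained by locally lifting a smooth group $2$-cocycle $f: N\times N \to A$ to a smooth map $\tilde f: U\times U \to \lie{a}$ through the covering $\lie{a} \to \lie{a}/\Gamma_A \isom A$ (possible because $A$ is locally Euclidean) and taking the skew-symmetrisation of its second derivative at $(e,e)$. A direct check translates the smooth group cocycle identity into the Jacobi-type identity of a continuous Lie algebra $2$-cocycle, and sends smooth group coboundaries to Lie algebra coboundaries, making $D$ well-defined on cohomology.

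For the leftward map $\iota$, I would interpret $\coh^2_s(N,A)$ as the set of equivalence classes of smooth Abelian extensions $A \hookrightarrow \hat N \to N$. Given $\phi \in \hom(\pi_1(N), A^N)$, the construction is to form the quotient $\hat N_\phi = (\tilde N \times A)/\pi_1(N)$, where $\pi_1(N) \subset Z(\tilde N)$ acts on $\tilde N$ by deck transformations and on $A$ via $\phi$. That $\phi$ lands in $A^N$ is exactly the condition ensuring the diagonal action is compatible with the semidirect product law on $\tilde N \times A$, so the quotient inherits a smooth group structure fitting into a smooth $A$-extension of $N$ and thus defines a class in $\coh^2_s(N,A)$.

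Exactness at $\coh^2_s(N,A)$ then splits in two parts. For $D\circ\iota = 0$: by construction the Lie algebra of $\hat N_\phi$ is the direct product $\lie{n}\oplus\lie{a}$ since the $\pi_1(N)$-twist is invisible infinitesimally, so the associated Lie algebra cocycle vanishes. Conversely, assume $[f]\in \ker D$ and fix a continuous Lie algebra splitting $\sigma: \lie{n} \to \hat{\lie{n}}$ of the infinitesimal extension. I would integrate $\sigma$ to a smooth group homomorphism $\tilde\sigma: \tilde N \to \hat N$. The restriction $\tilde\sigma|_{\pi_1(N)}$ factors through the kernel $A$ of $\hat N \to N$, and since $\pi_1(N)$ is central in $\tilde N$ its image commutes with $\tilde\sigma(\tilde N)$; because the $N$-action on $A$ is induced by conjugation in $\hat N$ this image must be $N$-fixed, yielding $\phi \in \hom(\pi_1(N), A^N)$. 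A short verification then identifies the associated extension $\hat N_\phi$ with the original $\hat N$.

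The hard part will be the integration step in the converse direction: for an infinite-dimensional locally convex Lie group a Lie algebra homomorphism need not integrate to a smooth group homomorphism on the simply connected cover. Here one must exploit that $\sigma$ is a splitting of an Abelian-module extension, together with the sequential completeness of $\lie{a}$ and the local Euclidean structure of $A$, to build $\tilde\sigma$ locally from a Baker--Campbell--Hausdorff-type formula coming from $\sigma$ and globalise it using simple connectedness of $\tilde N$ via a period-integral argument along loops. It is precisely this period obstruction on loops in $N$ that produces the class in $\hom(\pi_1(N), A^N)$, so the same calculation underpins both the existence of $\tilde\sigma$ and the exactness statement.
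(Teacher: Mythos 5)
This theorem is imported verbatim from Neeb~\cite[Thm.~VII.2]{neeb2004}; the paper offers no proof of its own, so there is nothing internal to compare against. Your architecture is the standard one and matches Neeb's in outline: the right-hand map $D$ is second-order differentiation plus antisymmetrisation of a locally smooth cocycle lifted through $\lie{a}\to\lie{a}/\Gamma_A$, the left-hand map sends $\phi$ to the quotient of $\tilde N\ltimes A$ by the graph $\{(d,\phi(d)^{-1}):d\in\pi_1(N)\}$, and exactness is tested by pulling back to the universal cover. Two small corrections: the Lie algebra of $\hat N_\phi$ is the semidirect sum $\lie{a}\rtimes\lie{n}$, not the direct product (the module action survives infinitesimally; what vanishes is the $2$-cocycle class, which is all you need for $D\circ\iota=0$); and the condition $\phi(\pi_1(N))\subset A^N$ is needed precisely so that the graph is normal in $\tilde N\ltimes A$ --- worth stating, since it is where $A^N$ rather than $A$ enters the sequence.

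The genuine gap is the one you flag, and your proposed repair does not work at the stated level of generality. $N$ is only a connected locally convex Lie group: it need not be locally exponential, no Baker--Campbell--Hausdorff series is available or convergent, and no regularity hypothesis is assumed, so you cannot integrate the continuous splitting $\sigma:\lie{n}\to\hat{\lie{n}}$ to a homomorphism $\tilde\sigma:\tilde N\to\hat N$ ``from a BCH-type formula''. That integration problem is the entire analytic content of the converse direction. The hypotheses actually provided --- sequential completeness of $\lie{a}$ and $A\isom\lie{a}/\Gamma_A$ --- let you integrate $\lie{a}$-valued data along paths, not exponentiate in $N$ or $\hat N$. The argument that does close is the one your last clause gestures at: view $\hat N\to N$ as a principal $A$-bundle, use $\sigma$ (equivalently, a cocycle representative with vanishing second-order term at the identity) to define a flat principal connection, and extract $\phi\in\hom(\pi_1(N),A^N)$ as its holonomy, the period integrals converging by sequential completeness of $\lie{a}$; alternatively, quote the injectivity of $D$ for simply connected groups, which Neeb establishes by exactly such a local-cocycle monodromy argument. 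As written, your central step is asserted rather than proved, and the asserted mechanism (BCH) is unavailable.
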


Consider now the right adjoint action of $\ballgroup$. In order to lift this action we need to impose $\pi_1(\loopgroup) = 0$. In general, it is known that 
\[
	\pi_k(S^n_eG) \isom \pi_{k+n}(G) .
\]
For instance, if we pick $G = SU(p)$ for $p>2$, the group $\loopgroup$ is simply connected; for $p=2$ we have topological sectors corresponding to $\Z_2$ -- we discuss this specific case separately below. Furthermore, we may restrict the action of $\ballgroup$ to the \emph{connected component} of $\loopgroup$ in case non-connectedness becomes an issue.

Assuming $\pi_4(G)$ is trivial, $H = \ballgroup$, $N = \loopgroup$ and $A = \potgroup$ as above, the sequence of Theorem~\ref{th:cohom_hom} gives a monomorphism $\coh^2_s(N,A) \to \coh^2_c(\lie{n},\lie{a})$; the Lie algebra $\lie{a}$ of $A$ is of the form $\smooth{\ballgroup,i\R}$, with the choice of $\Gamma_A = \Z$. In light of Equation~\eqref{eq:invariant_2cocycle}, the Lie algebra cocycle is smoothly cohomologically invariant with respect to right adjoing action by $f\in\ballgroup$, and  by above monomorphism the same holds on the group level.

It is then natural to ask whether we can lift the action of $\ballgroup$ on $\loopgroup$ to its extension $\gext{\loopgroup}$. Given any element $f \in\ballgroup$, we know that there is a smooth action by conjugation both on $\loopgroup$ and the fibre $\potgroup$. However, a map from $\ballgroup$ to $\aut{\gext{\loopgroup}}$ obtained in this fashion is not a group homomorphism; this can be seen by a direct computation already on the Lie algebra level. Generally, we have the following proposition.
\begin{prop}[Lifting homomorphism~{\cite[Prop. 3.8]{mw2016}}]\label{p:aut_lift}
Let $H$ be a Lie group, $N$ a connected normal Lie subgroup of $H$, $\theta \in \coc^2_s(N,A)$ a smooth $2$-cocycle and $\gext{N}$ the corresponding Abelian Lie group extension. Then the smooth group homomorphism $\psi: H \to \aut{A} \times \aut{N}$ lifts to a smooth homomorphism $\gext{\psi}:H\to \aut{\gext{N},A}$ if and only if 
\begin{enumerate}
	\item $\theta$ is smoothly cohomologically invariant with respect to $H$, and 
	\item the corresponding cohomology class $[\ext_{\psi}\phi] \in \coh^2_s(H,\coc^1_s(N,A))$ is trivial, where the $1$-cocycle $\phi$ is defined via $\ext_N (\phi(h)) = h.\theta - \theta$ for any $h \in H$.
\end{enumerate}
\end{prop}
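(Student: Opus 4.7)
The plan is to parameterise the extension $\gext{N}$ by a smooth local section $\sigma: N \to \gext{N}$, identifying its elements with pairs $(n,a) \in N \times A$ whose multiplication is twisted by $\theta$ as
\[
	(n_1,a_1)\cdot(n_2,a_2) = (n_1 n_2,\; a_1 + n_1.a_2 + \theta(n_1,n_2)) .
\]
Any element of $\aut{\gext{N},A}$ that projects to $\psi_N(h)$ on $N$ and restricts to $\psi_A(h)$ on $A$ is then forced into the shape
\[
	\gext{\psi}(h)(n,a) = \bigl(\psi_N(h)(n),\; \psi_A(h)(a) + \phi(h)(n)\bigr)
\]
for some cochain $\phi(h) \in \coch^1_s(N,A)$. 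The whole proof amounts to pinning down exactly which families $\bigl(\phi(h)\bigr)_{h \in H}$ turn this formula into an honest smooth group homomorphism in two variables.

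First I would impose that, for each fixed $h$, the map $\gext{\psi}(h)$ be a group automorphism of $\gext{N}$; comparing the $A$-components of $\gext{\psi}(h)\bigl((n_1,a_1)(n_2,a_2)\bigr)$ and $\gext{\psi}(h)(n_1,a_1)\gext{\psi}(h)(n_2,a_2)$ and cancelling terms produces the identity $h.\theta - \theta = \ext_N \phi(h)$, which is precisely condition~(1). Next I would demand that $h \mapsto \gext{\psi}(h)$ be multiplicative in $h$: writing out $\gext{\psi}(h_1 h_2) = \gext{\psi}(h_1)\gext{\psi}(h_2)$ with the explicit formula and using that $\psi$ is already a homomorphism, the $N$-component is automatic and the $A$-component reduces to
\[
	\phi(h_1 h_2)(n) - \psi_A(h_1).\phi(h_2)(n) - \phi(h_1)\bigl(\psi_N(h_2)(n)\bigr) = 0 ,
\]
the $\psi$-twisted cocycle identity $\ext_\psi \phi = 0$ for $\phi$ viewed as a cochain $H \to \coch^1_s(N,A)$.

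A subtlety appears here: condition~(1) only determines $\phi$ up to a shift $\phi(h) \mapsto \phi(h) + \eta(h)$ with $\eta(h) \in \coc^1_s(N,A)$, and such modifications change $\ext_\psi \phi$ only by $\ext_\psi \eta$; hence the genuinely canonical object is the class $[\ext_\psi \phi] \in \coh^2_s(H,\coc^1_s(N,A))$, and condition~(2) asks exactly that it vanish. The forward direction of the equivalence is then immediate: a lift yields a $\phi$ with $\ext_\psi \phi = 0$ outright, so both conditions hold. For the converse I would pick a $\phi$ realising condition~(1) and use the vanishing of $[\ext_\psi \phi]$ to produce a cochain $\eta : H \to \coc^1_s(N,A)$ with $\ext_\psi \eta = \ext_\psi \phi$; the replacement $\phi - \eta$ still satisfies $\ext_N(\phi - \eta) = h.\theta - \theta$ because $\ext_N \eta = 0$, and is now a $\psi$-cocycle in $h$, so the formula above defines the desired homomorphism $\gext{\psi}: H \to \aut{\gext{N},A}$. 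The hard part, as always in this infinite-dimensional cohomology, will be to carry this out \emph{smoothly} rather than just set-theoretically: $\phi$, the corrector $\eta$ and the intermediate witnesses have to be produced as smooth local cochains and the identities connecting them must hold in the refined locally smooth group cohomology of Neeb, which is where the assumptions on $H$, $N$ and on the locally convex model of $\lie{a}$ really enter the argument.
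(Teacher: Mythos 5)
The paper offers no proof of this proposition --- it is imported verbatim from \cite[Prop.~3.8]{mw2016} --- so there is nothing internal to compare against. Your argument is correct and is exactly the standard obstruction-theoretic derivation used in that reference (and in Neeb's framework it builds on): parametrise $\gext{N}$ as $N\times_\theta A$, observe that any lift is forced into the form $(n,a)\mapsto(\psi_N(h)(n),\psi_A(h)(a)+\phi(h)(n))$, and read off condition (1) from multiplicativity in $\gext{N}$ and condition (2) from multiplicativity in $h$ together with the $\coc^1_s(N,A)$-ambiguity in $\phi$. The only point worth making explicit is that $\ext_\psi\phi$ indeed takes values in $\coc^1_s(N,A)$ (because $\ext_N$ and $\ext_\psi$ commute and $h\mapsto h.\theta-\theta$ is a $1$-cocycle), so that the class in condition (2) lives where the statement says it does; you use this implicitly but do not verify it.
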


The first condition is fulfilled in the case of the $3$-loop group. However, the cohomology class given by $\ext_\psi \phi$ is not trivial, and the action of $\ballgroup$ does not lift to the extension. However, keeping in mind that our goal is a crossed module, we can reroute our approach through central extensions as will be explained in the next section. Let us conclude by gathering the results of this section in the following proposition:
\begin{prop}
\label{p:invariance}Let $\pi_1(\loopgroup)$ be trivial. Then the $2$-cocycle $\theta$ of the Abelian Lie group extension
\[
	\extseq{\potgroup}{\loopgroup}{\loopgroup}
\]
is smoothly cohomologically invariant with respect to the right adjoint action of $\ballgroup$. However, the action of $\ballgroup$ on $\loopgroup$ does not lift to an automorphic action on $\gext{\loopgroup}$.
\end{prop}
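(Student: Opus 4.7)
The two claims of the proposition correspond directly to the two hypotheses of Proposition~\ref{p:aut_lift}, so the natural strategy is to verify the first and then obstruct the second.

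For the invariance statement, the plan is to transport the Lie-algebra-level result in Equation~\eqref{eq:invariant_2cocycle} -- itself a special case of Proposition~\ref{p:path_group_invariance} under the dictionary of Remark~\ref{r:path_group} -- up to the smooth group cohomology of $\loopgroup$. Concretely, I would set $N=\loopgroup$, $A=\potgroup$ and $\lie{a}=\smooth{\ballgroup,i\R}$ with $\Gamma_A=\Z$, so that $A\isom\lie{a}/\Gamma_A$. Because we assume $\pi_1(\loopgroup)$ is trivial, the leftmost term $\hom(\pi_1(N),A^N)$ in the exact sequence of Theorem~\ref{th:cohom_hom} vanishes, and the map $\coh^2_s(N,A)\to\coh^2_c(\lie{n},\lie{a})$ is a monomorphism. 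Smooth cohomological invariance is preserved under this injection: if $r_f^*\theta-\theta$ is a coboundary on the Lie algebra side for every $f\in\ballgroup$, with the corresponding trivializing $1$-cochain depending smoothly on $f$ near the identity, then the same holds for any group-level representative, since any two such representatives differ by a coboundary coming from $\hom(\pi_1(N),A^N)=0$. This delivers Condition~(1) of Proposition~\ref{p:aut_lift}.

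For the non-lifting statement, I would invoke Proposition~\ref{p:aut_lift} contrapositively. Given Condition~(1), the obstruction to lifting lies entirely in the class $[\ext_\psi\phi]\in \coh^2_s(\ballgroup,\coc^1_s(\loopgroup,\potgroup))$, where $\phi:\ballgroup\to\coch^1_s(\loopgroup,\potgroup)$ is a $1$-cochain satisfying $\ext_{\loopgroup}\phi(f)=f.\theta-\theta$. The plan is to compute this obstruction at the Lie algebra level, where it is governed by the explicit Mickelsson-Faddeev cocycle~\eqref{eq:s3g-cocycle}. The coboundary formula extracted in Appendix~\ref{app1} provides the needed $\phi$; plugging this into $\ext_\psi\phi$ produces a bilinear expression in two elements of $\ballgroup$ that is \emph{not} a coboundary in $\ballgroup$ with values in $1$-cochains on $\loopgroup$. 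The simplest way to see this is to argue that if $[\ext_\psi\phi]$ were trivial, then -- again invoking the monomorphism coming from Theorem~\ref{th:cohom_hom} applied to the relevant module -- the naive conjugation map $\ballgroup\to\aut{\gext{\loopgroup}}$ would be forced to be a group homomorphism, whereas direct inspection on the Lie algebra shows the Leibniz defect for two generic elements of $\lie{ballgroup}$ does not cancel against any redefinition by a smooth $1$-cochain in $\loopgroup$.

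The main obstacle here is precisely this second step: producing a convincing non-vanishing of the obstruction class rather than merely the non-homomorphism of a particular choice of lift, since the difference between two candidate lifts is exactly the ambiguity encoded in $\coh^2_s(\ballgroup,\coc^1_s(\loopgroup,\potgroup))$. I would therefore spend the bulk of the computation comparing the transgression of the Chern-Simons-type form underlying~\eqref{eq:s3g-cocycle} with a potential trivializing $2$-cochain on $\ballgroup$, and use the non-centrality of the extension (the genuine $\ballgroup$-dependence of $\potgroup$) as the feature that prevents such a trivialization -- exactly the point the introduction emphasizes, and the structural reason that the subsequent construction has to reroute through a central extension rather than lifting the action directly.
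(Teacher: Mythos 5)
Your plan follows the same route as the paper: the invariance claim is obtained exactly as in the text, by combining the Lie-algebra coboundary computation of Equation~\eqref{eq:invariant_2cocycle} (spelled out in Appendix~\ref{app1}) with the monomorphism $\coh^2_s(N,A)\to\coh^2_c(\lie{n},\lie{a})$ supplied by Theorem~\ref{th:cohom_hom}, which is precisely where the hypothesis $\pi_1(\loopgroup)=0$ enters. For the non-lifting claim the paper likewise reduces matters to condition~(2) of Proposition~\ref{p:aut_lift} and simply asserts that $[\ext_\psi\phi]$ is nontrivial, citing a ``direct computation already on the Lie algebra level''; your explicit flagging that one must exclude \emph{every} trivializing cochain, rather than merely observe that naive conjugation fails to be a homomorphism, is if anything more careful than what the paper records, though like the paper you stop short of carrying out that computation.
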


\subsection{The case of $SU(2)$}
For $G=SU(2) \isom S^3$ we have $\pi_4(G) = \Z_2$. Moreover, in this case the above Lie algebra cocycle is identically zero~\cite{mic1987}. There is however a non-trivial extension
\[
	\Z_2 \hookrightarrow \mathcal{G} \to \loopgroup ,
\]
where $\mathcal{G}$ is the group of homotopy classes of paths $f$ in $\loopgroup$ such that $f(0) = e$ and $f(1) = g$, when we consider only the connected component. This is a covering group for $\loopgroup$ and there is a natural projection to path points.

The right conjugation by $f \in \ballgroup$ acting on any given path point does not change the homotopy. Thus the right adjoint action of the group $\ballgroup$ on $\loopgroup$ lifts to an automorphic action on the extension $\mathcal{G}$.

\section{$2$-group from a crossed module}

The aim of this paper to is construct an action groupoid that would fulfill the axioms of a crossed module, and thus define a strict $2$-group~\cite{bs1976}. Let us revisit the definitions.
\begin{defn}[Crossed module]
Let $G$ and $H$ be groups, and consider morphisms
\[ 
	\delta : H \to G \quad \textrm{and} \quad \alpha : G \to \aut{H} .
\]
We say that  $[\delta:H \to G]$ is a crossed module if the following two diagrams commute.
\[
\begin{tikzcd}
H \times H \arrow[rd,"\ad"] \arrow[r,"\delta\times\id"] & G \times H \arrow[d,"\alpha"]\\
	& H
\end{tikzcd}
\qquad
\begin{tikzcd}
G \times H \arrow[d,"\id\times\delta"] \arrow[r,"\alpha"] & H \arrow[d,"\delta"] \\
G \times G \arrow[r,"\ad"] & G
\end{tikzcd}
\]
Equivalently, if we denote by $h^g$ the element-wise action of $G$ on $H$, the diagrams correspond to the equations
\[
	h^{\delta(h')} = \inv{h'} h h'
\]
and
\[
	\delta(h^g) = \inv{g} \delta(h) g 
\]	
for all $h,h' \in H$ and $g \in G$.
\end{defn}

\begin{defn}[Smooth crossed module]
If the groups $G$ and $H$ in a crossed module $[\delta:H\to G]$ are Lie groups and the action defined by the morphism $\alpha$ is smooth, the crossed module is called a Lie crossed module, or a smooth crossed module.
\end{defn}

As stated in Proposition~\ref{p:invariance}, we do not have a lift of the action of $\ballgroup$ to the extension $\gext{\loopgroup}$. Even if we had, this would not be sufficient for constructing a crossed module, since the extension $\gext{\loopgroup}$ would not allow for a natural morphism $\delta$ to the acting group $\ballgroup$. Let us then extend the group $\ballgroup$ as follows. There is a topologically trivial Lie group extension
\[
	\extseq{\potgroup/S^1}{\ballgroup}{\ballgroup} ,
\]
which on the Lie algebra level links back to the Mickelsson-Faddeev cocycle (modulo the Lie algebra of $S^1$). The reason for modifying the center with $S^1$ comes from the need to fulfill the cocycle condition when considering the domain $B^3$ instead of $S^3$: there is a constant boundary term
\[
	\cob \theta = \frac{i}{24\pi} \int_{\bndr B^3} \tr x [\ext y, \ext z] - y [\ext z, \ext x] + z [\ext x, \ext y] ,
\]
which then vanishes by construction since the Lie algebra of $S^1$ is $i\R$.

Note that now the right adjoint action of $\gext{\ballgroup}$ defines automorphisms on the Lie algebra of the extension $\gext{\loopgroup}$ so that we have a homomorphism
\[
	\gext{\ballgroup} \to \aut{\gext{\Omega^3\lie{g}}} ,
\]
which leads to the following proposition.
\begin{prop}
If the (connected component of the) group $\gext{\loopgroup}$ is simply connected, then the homomorphism
\[
	\alpha' : \gext{\ballgroup} \to \aut{\gext{\Omega^3\lie{g}}}
\]
lifts to a homomorphism
\[
	\alpha : \gext{\ballgroup} \to \aut{\gext{\loopgroup}} 
\]
such that the action of $\aut{\gext{\loopgroup}}$ on $\gext{\loopgroup}$ is smooth.
\end{prop}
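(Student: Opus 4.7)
The plan is to apply the infinite-dimensional version of Lie's second theorem: when the target Lie group is simply connected, a continuous Lie algebra homomorphism integrates uniquely to a Lie group homomorphism. More concretely, differentiation at the identity gives an injection $\aut{\gext{\loopgroup}} \to \aut{\gext{\Omega^3\lie{g}}}$ which, under the simply-connectedness hypothesis on the connected component of $\gext{\loopgroup}$, is bijective. For each $\gext f\in\gext{\ballgroup}$ I would then define $\alpha(\gext f)$ to be the unique Lie group automorphism of $\gext{\loopgroup}$ whose derivative at the identity equals $\alpha'(\gext f)$.

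Next, to verify that the assignment $\gext f \mapsto \alpha(\gext f)$ is a group homomorphism, observe that for any pair $\gext{f_1},\gext{f_2}\in\gext{\ballgroup}$ both $\alpha(\gext{f_1}\gext{f_2})$ and $\alpha(\gext{f_1})\circ\alpha(\gext{f_2})$ are automorphisms of $\gext{\loopgroup}$ with the same differential at the identity, namely $\alpha'(\gext{f_1}\gext{f_2}) = \alpha'(\gext{f_1})\circ\alpha'(\gext{f_2})$, since $\alpha'$ is already a homomorphism by hypothesis. Uniqueness of the integration then forces the two to agree globally.

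For smoothness of the combined action map $\gext{\ballgroup}\times\gext{\loopgroup}\to\gext{\loopgroup}$, I would argue locally first: in a suitable identity neighbourhood of $\gext{\loopgroup}$ the automorphism $\alpha(\gext f)$ can be written as $\exp\circ\,\alpha'(\gext f)\circ\log$, which is jointly smooth in $\gext f$ and its argument because $\alpha'$ is built from the smooth right adjoint action combined with the explicit Mickelsson-Faddeev cocycle term. Joint smoothness then extends to all of $\gext{\loopgroup}$ by translating through the group multiplication, using that each $\alpha(\gext f)$ is itself a group automorphism.

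The main obstacle is precisely the integration step in the Fréchet-Lie category, where Lie's second theorem is not automatic: one must check that $\alpha'(\gext f)$ is not merely bracket-preserving but integrates compatibly with the exponential map on $\gext{\loopgroup}$. In our setting this is ensured by the fact that $\alpha'(\gext f)$ comes from genuine conjugation in the ambient mapping group $\smooth{B^3,G}$ extended by $S^1$; the integration can effectively be carried out inside this ambient group, and the restriction to $\gext{\loopgroup}$ is well-defined since the conjugation preserves the extension structure.
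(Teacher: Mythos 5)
Your argument is essentially the paper's own proof: the paper simply cites Corollary 9.5.11 of Hilgert--Neeb, which is exactly the statement that for a connected, simply-connected Lie group differentiation gives an isomorphism $\aut{\gext{\loopgroup}}\to\aut{\gext{\Omega^3\lie{g}}}$, and defines $\alpha$ by composing $\alpha'$ with its inverse. You unfold the content of that corollary (uniqueness of integration, the homomorphism property, smoothness) and are, if anything, more explicit than the paper about why the integration step survives in the Fr\'echet--Lie setting.
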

\begin{proof}
The statement is a direct consequence of Corollary 9.5.11 in \cite[p. 341]{hn2012}, which gives a Lie group isomorphism between the automorphism groups of a connected and simply-connected Lie group and its Lie algebra. Hence the following diagram commutes.
\[
	\begin{tikzcd} & \aut{\gext{\loopgroup}} \arrow{d}[sloped,above]{\isom} \\
		\gext{\ballgroup} \arrow[ru,"\alpha"] \arrow[r,"\alpha'"] & \aut{\gext{\Omega^3\lie{g}}}
	\end{tikzcd}
\]
\end{proof}

\begin{rem}
For the canonical central extension of the one-dimensional loop group it holds that $\pi_1(\gext{\Omega G}) = 0$. A similar argument can be used in the case of $\loopgroup$.
\end{rem}

On the other hand, whether $\gext{\loopgroup}$ is simply connected or not, we can consider the group
\[
	\loopgroupmod = \potgroup/S^1 \rtimes \loopgroup
\]
which is a normal subgroup of $\gext{\ballgroup}$, and hence there is a smooth right adjoint action of $\gext{\ballgroup}$ on $\loopgroupmod$ by conjugation. Furthermore, while the previously introduced $3$-loop group extension is not central, there is a central extension
\begin{equation}\label{eq:central_ext}
	\extseq{S^1}{\loopgroupmod}{\loopgroupmod} ,
\end{equation}
where the group $\gext{\ballgroup}$ acts trivially on the central $S^1$. Note that this is none other than the original extension $\gext{\loopgroup}$ of the $3$-loop group.

Now we can consider the right adjoint action of $\gext{\ballgroup}$ on the group $\gext{\loopgroup}$. The cocycle corresponding to $\gext{\loopgroup}$ is the familiar $2$-cocycle $\theta$, and so by Proposition~\eqref{p:invariance} and the construction above we know that it is smoothly cohomologically invariant with respect to the action of $\gext{\ballgroup}$. Hence the first condition of Proposition~\ref{p:aut_lift} for lifting the homomorphism
\[
	\psi: \gext{\ballgroup} \to \aut{S^1} \times \aut{\potgroup/S^1 \rtimes \loopgroup}
\]
to a homomorphism
\[
	\gext{\psi} : \gext{\ballgroup} \to \aut{\gext{\loopgroup},\potgroup}
\]
is fulfilled.

For the second condition we observe the following. Let groups $H, N$ and $A$ be as in Proposition~\ref{p:aut_lift} with the cocycle $\theta \in \coc^2_s(N,A)$ corresponding to the Abelian extension $\gext{N}$. Assuming that $N$ is split normal subgroup of $H$ and that $A^N$ is a split Lie subgroup of $A$, there is a commuting diagram~\cite[Lemma 4.3 and the preceeding definitions]{mw2016}
\[
\begin{tikzcd}
	1 \arrow[r] & A \arrow[d] \arrow[r] & \gext{N} \arrow[d] \arrow[r] & N \arrow[d] \arrow[r] & 1 \\
	1 \arrow[r] & \coc_s^1(N,A) \arrow[r] & \Gamma \arrow[r] & H \arrow[r] & 1   
\end{tikzcd}
\]
where $\Gamma$ is the extension of $H$ by $Z_s^1(N,A)$. Furthermore, there is a smooth automorphic action $\Gamma \to \aut{\gext{N}}$; the pair $(\gext{N}, \Gamma)$ in fact has the structure of a smooth crossed module provided that the cohomology group $\coh^1_s(N,A)$ is trivial~\cite[Prop. 4.5 and Thm. 4.6]{mw2016}. In particular, this condition ensures that $\Gamma$ is a Lie group extension and that $Z_s^1(N,A) \isom A/A^N$.

\begin{lem}
Let $G=SU(p)$ with $p\geq 3$. Then the group $\coh^1_s(\loopgroup,\potgroup)$ is trivial. 
\end{lem}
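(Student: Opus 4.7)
The plan is to reduce the statement to Lie algebra cohomology by a van Est-type theorem, and then to exploit the semisimplicity of $\lie{g} = \lie{su}(p)$ via a Whitehead-style argument carried out pointwise over $S^3$.

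First I would verify that for $p \geq 3$ the connected component of $\loopgroup$ is simply connected: using the identification $\pi_k(\Omega^n_e G) \isom \pi_{k+n}(G)$ already invoked in the paper,
\[
	\pi_1(\loopgroup) \isom \pi_4(SU(p)) = 0 ,
\]
and we may pass to the connected component throughout by the standing convention. Note that $\pi_0(\loopgroup) \isom \pi_3(SU(p)) \isom \Z$, so this restriction is genuinely needed.

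Next I would appeal to the degree-$1$ analog of Theorem~\ref{th:cohom_hom} (see \cite[Appendix D]{neeb2004}): for a connected and simply connected Lie group $N$ with smooth module $A$, the obstruction living in $\hom(\pi_1(N), A^N)$ is trivial and the derivative map yields an injection
\[
	\coh^1_s(\loopgroup, \potgroup) \hookrightarrow \coh^1_c(\Omega^3 \lie{g}, \lie{a}) ,
\]
where $\lie{a} = \smooth{\ballgroup, i\R}$ is the Lie algebra of $\potgroup$. The remaining task is then to establish vanishing of the right-hand side.

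A continuous $1$-cocycle here is a continuous linear map $\alpha \colon \Omega^3 \lie{g} \to \lie{a}$ satisfying $\alpha([x,y]) = x \cdot \alpha(y) - y \cdot \alpha(x)$, the action being inherited from the conjugation of $\loopgroup \subset \ballgroup$. Since $\lie{g} = \lie{su}(p)$ is semisimple it admits a quadratic Casimir, and I would apply the classical Whitehead recipe pointwise over $S^3$ to produce a candidate $\mu \in \lie{a}$ for which $\alpha(x) = x \cdot \mu$, realising $\alpha$ as a coboundary. Perfectness of $\Omega^3 \lie{g}$, inherited from that of $\lie{g}$, ensures that agreement on brackets pins $\alpha$ down on the entire algebra.

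The hardest step is the smoothness check in this last paragraph: the pointwise Whitehead construction must deliver a genuine element $\mu \in \smooth{\ballgroup, i\R}$ rather than a merely continuous or measurable one. Smoothness of $\mu$ should follow from continuity of $\alpha$ together with the smooth parameter dependence of the Casimir splitting in the gauge algebra, but lifting the finite-dimensional Whitehead lemma to the current-algebra $\Omega^3 \lie{g}$ acting on the infinite-dimensional module $\smooth{\ballgroup, i\R}$ is the delicate technical point of the argument.
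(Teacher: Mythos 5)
Your reduction to Lie algebra cohomology is legitimate as far as it goes -- for connected $N$ the derivative map $\coh^1_s(N,A)\to\coh^1_c(\lie{n},\lie{a})$ is indeed injective, and restricting to the identity component of $\loopgroup$ is covered by the paper's standing convention -- but the second half of your argument has a genuine gap. Whitehead's first lemma is a statement about a finite-dimensional semisimple Lie algebra acting on a finite-dimensional module: the Casimir trick needs the Casimir operator to exist and to act invertibly off the invariants. Neither is available here. The algebra is the current algebra $\Omega^3\lie{g}$, for which the candidate Casimir $\sum_i e_ie^i$ over a basis diverges, and the module $\lie{a}=\smooth{\ballgroup,i\R}$ consists of functionals on the infinite-dimensional group $\ballgroup$, on which $x\in\Omega^3\lie{g}$ acts as a first-order differential operator (a left-invariant vector field), not pointwise over $S^3$; the value $\alpha(x)$ is a function on $\ballgroup$, not a density on $S^3$, so there is nothing to which a ``pointwise Whitehead recipe'' could be applied. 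Worse, the heuristic proves too much: the same reasoning applied to $\smooth{S^4,\lie{g}}$ acting on functionals of four-dimensional gauge potentials would force $\coh^1=0$, contradicting the existence of the non-Abelian consistent anomaly, which is precisely a nontrivial $1$-cocycle of this type for semisimple $\lie{g}$. Semisimplicity of $\lie{su}(p)$ alone cannot be the mechanism.

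The vanishing is in fact topological, and that is what the paper's proof exploits: a class in $\coh^1_s(\loopgroup,\potgroup)$ is the clutching datum of a circle bundle over $\ballgroup/\loopgroup\isom\smooth{S^2,G}$, and for $p\geq 3$ Bott periodicity gives $\coh^2(\smooth{S^2,G},\Z)=0$, so every such bundle -- hence every cocycle -- is trivial. If you insist on working at the Lie algebra level, the correct replacement for Whitehead is again topological rather than algebraic: the complex $\coch^\bullet(\Omega^3\lie{g},\smooth{\ballgroup,i\R})$ is the complex of fibrewise differential forms of the fibration $\ballgroup\to\smooth{S^2,G}$, so $\coh^1_c$ vanishes because the fibre $\loopgroup$ has trivial first de Rham cohomology (here your observation $\pi_1(\loopgroup)\isom\pi_4(SU(p))=0$ for $p\geq 3$ is the relevant input), every closed vertical $1$-form then admitting a smoothly varying fibrewise primitive. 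Either way the decisive input is the topology of $\loopgroup$ and of $\smooth{S^2,G}$, not the Casimir of $\lie{su}(p)$.
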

\begin{proof}
Denote $N = \loopgroup$, $A=\potgroup$ and $H=\smooth{S^2,G}$; note that we can identify $H$ with $\ballgroup/N$. By the Bott periodicity the cohomology of $H$ in low degrees is generated by elements $\alpha_{2k+1}$ in odd degrees $(k=1,2, \dots)$. In particular, $\coh^2_s(H, \Z)$ vanishes for $p \geq 3$ and there are no nontrivial circle bundles over $H$.

On the other hand, an element $c_1\in \coh^1_s(N, A)$ describes a circle bundle $Q$ over $H = \ballgroup/N$: elements in $Q$ are equivalence classes of pairs $(g, \lambda) \in \ballgroup \times S^1$ with the equivalence relation $(g, \lambda) \sim (gu, c_1(g;u)\lambda)$. The bundle can be trivialized if and only if $c_1(g; u)= f(gu)f(g)^{-1}$ for some $f: \ballgroup \to S^1$.  Thus the group $H_s^1(N,A)$ is trivial. 
\end{proof}

Assuming that we have the conditions satisfying the preceeding Lemma there is then a smooth homomorphism
\[
	\Gamma \to \aut{\gext{\loopgroup}} ,
\]
where the Lie group $\Gamma$ is defined by the extension
\[
	\potgroup/{\smooth{S^2G,S^1}} \hookrightarrow \Gamma \to \ballgroup .
\]
Here we have identified $A^N \coloneqq \potgroup^{\loopgroup}$ with the group ${\smooth{S^2G,S^1}}$.

We further note that there is a natural smooth homomorphism
\[
	p: \potgroup/S^1 \to \potgroup/{\smooth{S^2G,S^1}} ,
\]
and that the fibre quotient ${\smooth{S^2G,S^1}}$ acts trivially on $\gext{\loopgroup}$. Hence the map $\Gamma \to \aut{\gext{\loopgroup}}$ lifts to a smooth homomorphism
\[
	\alpha: \gext{\ballgroup} \to \aut{\gext{\loopgroup}} ,
\]
which gives us a unique action
\[
	\gext{\loopgroup} \times \gext{\ballgroup} \to \gext{\loopgroup} .
\]
This defines an action groupoid $\gext{\loopgroup}//\gext{\ballgroup}$, 
which can be in fact extended to a crossed module.

By the central extension \eqref{eq:central_ext}, we have an epimorphism
\[
	\gext{\loopgroup} \to \potgroup/S^1 \rtimes \loopgroup ,
\]
where $\potgroup/S^1 \rtimes \loopgroup $ is a normal subgroup of $\gext{\ballgroup}$. Now the above action groupoid fulfills the requirements of a crossed module by construction. Let $\alpha: \gext{\ballgroup} \to \aut{\gext{\loopgroup}}$ as above. The morphism $\delta: \gext{\loopgroup} \to \gext{\ballgroup}$ is gathered as a composite of the above epimorphism and the natural inclusion:
\[
	\gext{\loopgroup} \to \potgroup/S^1 \rtimes \loopgroup  \hookrightarrow \gext{\ballgroup} .
\]

This is the three-dimensional strict $2$-group analogue to that in \cite[Definition 3.5.]{mrw2017}. In contrast, we have no comparable equivalence of Lie groupoids (\cite[Theorem 3.6.]{mrw2017}), since there is no analogue to the non-smooth quasi-periodic action. Likewise, there seems to be no way to include the $SO(4)$-equi\-variance in this picture.


\printbibliography

\newpage
\appendix

\section{}
\label{app1}

\subsection{Explicit cohomology invariance of the $2$-cocycle}

Consider the following $2$-cocycle for the Lie algebra $\Omega^3\lie{g}$:
\[
	\theta(A;x,y) = \int_{S^3} \tr A [\ext x, \ext y] .
\]
We wish to show that the cocycle is cohomologically invariant under the right adjoint action of the group $\ballgroup$.

On the Lie algebra the action is the right conjugation by $f \in \ballgroup$: 
\begin{align*}
	A \mapsto \inv{f} A f + \inv{f} \ext{f} , \quad x \mapsto \inv{f} x f ,
\end{align*}
so that
\[
	\\ 
    \ext (x^f) = -\inv{f}\omega x f + \inv{f} \ext x f + \inv{f} x \ext f ,
\]
where $\omega \coloneqq \ext{f}\inv{f}$ is the right-invariant Maurer-Cartan form. We then get the conjugated cocycle $\theta^f$:
\begin{align*}
  \int_{S^3}\tr A & [\ext x, \ext y] - A \omega [x,y] \omega + A(x\omega\ext y - y \omega \ext x - x\omega^2y + y\omega^2x - \ext x \omega y + \ext y \omega x) \\
  	  & + (A\omega + \omega A)(x\omega y - y \omega x) - A \omega(x\ext y - y \ext x) + \omega A(\ext x y - \ext y x) \\
  	  & + 2\omega^2(x\omega y - y\omega x) - \omega(\ext x \omega y - \ext y \omega x + x\omega^2 y - y \omega^2 x - x\omega \ext y + y \omega \ext x) \\ 
  	  & +\omega^2(\ext x y - \ext y x - x \ext y - y \ext x) + \omega[\ext x, \ext y] - \omega^3[x,y] .
\end{align*}
Using $\omega^2 = \ext \omega$ and Stokes' theorem, we can reduce the terms independent of $A$ to:
\[
	\omega^2(x\omega y - y\omega x) - \omega[\ext x, \ext y] - \omega^3[x,y] .
\]

Let $\lambda(A;z)$ be a $1$-cochain. Then,
\[
	\cob(\lambda)(A;x,y) = \lder{x} \lambda(A;y) - \lder{y} \lambda(A;x) - \lambda(A;[x,y]),
\]
where the derivation $\lder{x}$ acts on $A$ as 
\[
	\lder{x} A = [A,x] + \ext x .
\]
Let $\lambda_i(A;z)$ be $1$-cochains as follows:
\begin{align*}
	&\lambda_1(A;z) = \int_{S^3}\tr A \omega [\omega,z] , \\
	&\lambda_2(A;z) = \int_{S^3}\tr [\omega, A] \ext z  , \\
	&\lambda_3(A;z) = \int_{S^3}\tr \omega^3 z .
\end{align*}
The coboundaries are
\begin{align*}
	\cob\lambda_1(A;x,y) ={}& \int_{S^3}\tr A(x\omega^2 y - y\omega^2 x) - \omega^2(x\ext y - y\ext x) \\
						& \phantom{\int_{S^3}\tr} + A \omega [x,y] \omega  - (A \omega + \omega A)(x\omega y - y \omega x) + \omega (x\omega \ext y - y \omega \ext x) , \\
	\cob\lambda_3(A;x,y) ={}& \int_{S^3}\tr A(\ext x \omega y - \ext y \omega x) - \omega A(\ext x y - \ext y x) \\
						& \phantom{\int_{S^3}\tr} - A(x\omega \ext y - y \omega \ext x) + A \omega(x \ext y - y\ext x) + 2\omega[\ext x, \ext y] , \\
	\cob\lambda_3(A;x,y) ={}& \int_{S^3}\tr \omega^3[x,y] .
\end{align*}
Then clearly
\begin{align*}
	\theta^f = \theta - \sum_i \cob \lambda_i ,
\end{align*}
as wanted.

\subsection{General formula}

Let us expand on Proposition \ref{p:path_group_invariance} and Remark \ref{r:path_group}. Assume $f,g \in \pth{S^2G}$ and consider $2$-cocycle $\omega(g;x,y)$ for the Lie algebra of $\Omega(S^2G)$. We have the right action
\[
	\omega^f(g;x,y) = \omega(gf; \ad_f(x), \ad_f(y) ) ,
\]
where the adjoint action is $\ad_f(x) = \inv{f}xf$ in a matrix representation. We wish to show that $\omega^f - \omega$ is a coboundary. The argument is essentially the same as in the Poincaré lemma.

Let $f_t$ be a path in $\pth{S^2G}$ with $f_0 = e$ and $f_1 = f$. Let us then rewrite the sought term as follows:
\begin{align*}
	\omega^f - \omega &= \int_0^1 \frac{d}{dt} \omega^f  \, \ext t \\
		&= \int_0^1 \omega (g\dot{f_t}; \ad_{f_t}{x},\ad_{f_t}y) \, \ext t  + \int_0^1 \omega (gf_t;[\inv{f_t} x f_t , \inv{f_t} \dot{f_t} ] , \inv{f_t} y f_t) \, \ext t \\
		&\phantom{=} + \int_0^1 \omega (gf_t; \inv{f_t} x f_t , [\inv{f_t} y f_t , \inv{f_t} \dot{f_t} ] ) \, \ext t \\
		&= - \int_0^1 \omega (gf_t; \inv{f_t} \dot{f_t}, [\inv{f_t} x f_t, \inv{f_t} y f_t ] ) \, \ext t \\
		&\phantom{=} + \int_0^1 \lder{\inv{f_t}x f_t} \omega (gf_t; \inv{f_t} \dot{f_t}, \inv{f_t} y f_t ) \, \ext t \\
		&\phantom{=} + \int_0^1 \lder{\inv{f_t}y f_t} \omega (gf_t; \inv{f_t} \dot{f_t}, \inv{f_t} x f_t ) \, \ext t , \\
\end{align*}
where on the last equality we have used the $2$-cocycle property of $\omega$. The result indeed is a coboundary $(\cob \theta)(g;x,y)$, where
\[
	\theta(g;z) = \int_0^1 \omega(gf_t; \inv{f_t} \dot{f_t}, \inv{f_t} z f_t) \, \ext t . 
\]
It is important that either $x$ or $y$ must be periodic (hence $\Omega(S^2G)$ so that both are), for this ensures that the boundary terms vanish.

Let us sketch this more concretely for the one-dimensional loops with the known $2$-cocycle
\[
	\omega(x,y) = \frac{1}{2\pi}\int_0^{2\pi} \tr x y' \, \ext \phi 
\]
in the Lie algebra $\Omega{\lie{g}}$. Here we have abbreviated $\frac{d}{d\phi} \alpha = \alpha'$ in contrast to the path-related derivation $\frac{d}{dt} \alpha = \dot{\alpha}$. Now,
\begin{align*}
	\theta(x) &= \int_0^1 \omega(\inv{f}_t \dot{f_t}, \inv{f}_t x f_t )  \, \ext t \\
			 &= \frac{1}{2\pi}\ \int_0^1 \int_0^{2\pi}  \tr \inv{f}_t \dot{f_t} \frac{d}{d\phi} (\inv{f}_t x f_t ) \, \ext \phi \, \ext t .
\end{align*}
We can choose a path $f_t = e^{tz(\phi)}$, where $z:[0,2\pi] \to \lie{g}$. We then have
\[
	\inv{f}_t \dot{f_t} = z(\phi) ,
\]
and so for the cochain
\begin{align*}
	\theta(x) &= \frac{1}{2\pi} \int_0^1 \int_0^{2\pi}  \tr z(\phi) \frac{d}{d\phi} (\inv{f}_t x f_t ) \, \ext \phi \, \ext t \\
			&= - \frac{1}{2\pi} \int_0^1 \int_0^{2\pi}  \tr z'(\phi) (\inv{f}_t x f_t ) \, \ext \phi \, \ext t \\
			&= - \frac{1}{2\pi} \int_0^1 \int_0^{2\pi}  \tr f_t z'(\phi) \inv{f}_t x   \, \ext \phi \, \ext t
\end{align*}
by partial integration and the cyclicity of the trace. Next we note that
\begin{align*}
	\frac{d}{dt} (f_t \frac{d}{d\phi} \inv{f}_t) &= f_t z(\phi) \frac{d}{d\phi} \inv{f}_t - f_t \frac{d}{d\phi} (z(\phi) \inv{f}_t) = -f_t z'(\phi) \inv{f}_t .
\end{align*}
Inserting this back to the cochain we have
\begin{align*}
	\theta(x) &= \frac{1}{2\pi} \int_0^1 \int_0^{2\pi}  \tr \frac{d}{dt} (f_t \frac{d}{d\phi} \inv{f}_t) x \, \ext \phi \, \ext t ,
\end{align*}
so that finally by integrating over $t$ we see that our cochain indeed is a coboundary~\cite[Example 3.10]{mw2016}
\[
	\theta(x) = - \frac{1}{2\pi} \int_0^{2\pi}  \tr f'_1 \inv{f}_1 x \, \ext \phi .
\]

\end{document}